\newcommand{\bgq}{\boldsymbol{\theta}}
\newcommand{\bgr}{\boldsymbol{\rho}}
\newcommand{\bga}{\boldsymbol{\alpha}}
\newcommand{\bgg}{\boldsymbol{\gamma}}
\newcommand{\bgt}{\boldsymbol{\tau}}
\newcommand{\gf}{\varphi}
\newcommand{\jj}{\vee}
\newcommand{\ci}{\subseteq}
\newcommand{\setm}[2]{\{\,#1\mid#2\,\}}
\newcommand{\set}[1]{\{#1\}}
\def\cng#1=#2(#3){#1\equiv#2\pod{#3}}
\newcommand{\tbf}{\textbf}
\newtheorem{theorem}{Theorem}
\begin{document}
\title{Lattice tolerances and congruences}

\author[G.\ Cz\'edli]{G\'abor Cz\'edli}
\email{czedli@math.u-szeged.hu}
\urladdr{http://www.math.u-szeged.hu/$\sim$czedli/}
\address{University of Szeged\\ Bolyai Institute\\Szeged,
Aradi v\'ertan\'uk tere 1\\ Hungary 6720}
\author[G.\ Gr\"atzer]{George Gr\"atzer}
\email{gratzer@me.com} 
\urladdr{http://server.math.umanitoba.ca/homepages/gratzer/}
\address{Department of Mathematics\\University of Manitoba\\Winnipeg, MB R3T 2N2\\Canada}

\thanks{This research was supported by the NFSR of Hungary (OTKA), 
grant no. K77432} 

\subjclass[2000]{Primary: 06B10, Secondary: 08A30}
\keywords{Lattice, tolerance, congruence}

\date{\today}

\begin{abstract} 
We prove that a tolerance relation of a lattice 
is a homomorphic image of a congruence relation.
\end{abstract}
\maketitle

Let $K$ and $L$ be lattices. Let  $\gf\colon K \to L$ 
be a homomorphism of $K$ onto $L$. 
Let~$\bgq$ be a congruence relation on $K$. 
Then we can define a binary relation $\gf(\bgq)$ in the obvious way:
\[
   \gf(\bgq)=\bigl\{\bigl(\gf(x),\gf(y)\bigr) \mid (x,y)\in\bgq\bigr\}.
\]

It belongs to the folklore 
(see, for instance, E. Fried and G.~Gr\"atzer
\cite{FG90} or G.~Gr\"atzer \cite{LTF}), 
that $\gf(\bgq)$ is a tolerance relation, that is, 
$\gf(\bgq)$ is a binary relation on $L$ with the following properties: 
reflexivity, symmetry, and the Substitution Properties. 

We prove the converse.

\begin{theorem}\label{T:main}
Let $\bgr$ be a tolerance relation of a lattice $L$. 
Then there exists a lattice~$K$, a congruence $\bgq$ of $K$, 
and a lattice homomorphism $\gf\colon K\to L$ such that $\bgr = \gf(\bgq)$.
\end{theorem}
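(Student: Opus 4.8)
The plan is to realize $\bgr$ through its blocks. Recall that a \emph{block} of $\bgr$ is a maximal subset $B\ci L$ with $B\times B\ci\bgr$. By reflexivity and symmetry, $\set{a,b}\times\set{a,b}\ci\bgr$ for every $(a,b)\in\bgr$, and a Zorn's lemma argument shows that every subset $X\ci L$ with $X\times X\ci\bgr$ extends to a block; hence
\[
   \bgr=\bigcup\setm{B\times B}{B\text{ is a block of }\bgr}.
\]
I would also invoke the structure theory of lattice tolerances (see \cite{LTF}): the blocks of $\bgr$ form a lattice $L/\bgr$ whose operations are compatible with those of $L$ in the following way (which is essentially a restatement of the Substitution Properties of $\bgr$): for blocks $B,C$ and elements $a\in B$, $c\in C$, one has $a\jj c\in B\jj C$ and $a\wedge c\in B\wedge C$.

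Granting this, I would set
\[
   K=\setm{(B,a)}{B\text{ is a block of }\bgr\text{ and }a\in B},
\]
which, by the compatibility just stated, is a sublattice of the direct product $(L/\bgr)\times L$; in particular $K$ is a lattice, with $(B,a)\jj(C,c)=(B\jj C,a\jj c)$ and $(B,a)\wedge(C,c)=(B\wedge C,a\wedge c)$. Let $\gf\colon K\to L$ be the restriction of the second projection, $\gf(B,a)=a$, and let $\bgq$ be the kernel of the restriction to $K$ of the first projection $(L/\bgr)\times L\to L/\bgr$; thus $\bigl((B,a),(C,c)\bigr)\in\bgq$ if and only if $B=C$. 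Being restrictions of projections of a direct product, $\gf$ is a lattice homomorphism and $\bgq$ is a congruence of $K$. Moreover $\gf$ is onto: given $a\in L$, reflexivity of $\bgr$ yields a block $B$ with $a\in B$, and then $(B,a)\in K$ and $\gf(B,a)=a$.

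It then remains to check $\gf(\bgq)=\bgr$. If $\bigl((B,a),(C,c)\bigr)\in\bgq$, then $B=C$, so $a,c\in B$ and hence $(a,c)\in B\times B\ci\bgr$; this gives $\gf(\bgq)\ci\bgr$. Conversely, if $(a,c)\in\bgr$, then $\set{a,c}\times\set{a,c}\ci\bgr$, so $a$ and $c$ lie in a common block $B$; then $(B,a),(B,c)\in K$, $\bigl((B,a),(B,c)\bigr)\in\bgq$, and $\gf(B,a)=a$, $\gf(B,c)=c$, so $(a,c)\in\gf(\bgq)$. Hence $\bgr=\gf(\bgq)$, as desired. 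The only point where genuine lattice theory (rather than formal manipulation of relations) enters --- and hence the main thing to verify --- is that $K$ is a sublattice of $(L/\bgr)\times L$, i.e.\ that the block operations and the operations of $L$ are compatible in the sense above. This is a routine consequence of the description of the blocks of a lattice tolerance; one could alternatively build the order and operations of $K$ (and hence of $L/\bgr$) by hand, starting from the observation that $(a,b)\in\bgr$ forces $(a\wedge b,a\jj b)\in\bgr$.
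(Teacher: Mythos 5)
Your proposal is correct and follows essentially the same route as the paper: the same block construction $K=\setm{(B,a)}{a\in B}$ with coordinatewise operations (relying, as the paper does via \cite{cG82}, on the nontrivial fact that the blocks form a lattice compatible with the operations of $L$), the same congruence $\bgq$ as the kernel of the first projection, and the same homomorphism $\gf$ as the second projection. Viewing $K$ as a sublattice of $(L/\bgr)\times L$ and spelling out the Zorn's lemma step and both inclusions of $\gf(\bgq)=\bgr$ are just more explicit renderings of what the paper leaves implicit.
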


\begin{proof} By a \emph{block} of $\bgr$ we mean 
a maximal subset $X$ of $L$ such that $X^2 \ci \bgr$. 
According to  \cite{cG82}, the set of all blocks of $\bgr$ 
form a lattice $L/\bgr$ such that, for $A,B\in S$, 
the join $A\vee B$ is the \emph{unique} block of $\bgr$ that includes the set
\[
   \setm{a\jj b}{a\in A,\ b\in B},
\]
and similarly for the meet. This allows us to define a \emph{lattice} $K$:
\[
    \setm{(A,x)}{A\in L/\bgr,\ x\in A},
\]
with the operations 
\[
   (A,x) \jj (B,y) = (A \jj B, x \jj y),
\]
and dually. Then 
\[
   \bgq = \bigl\{ \bigl( (A,x),(B,y) \bigr) \mid A = B\bigr\}
\]
is a congruence on $K$. Clearly, $(A,x)\mapsto x$ 
defines a homomorphism $\gf$ of $K$ onto~$L$. 
From Zorn's Lemma, we infer that $(x,y)\in \bgr$ 
if{}f $\set{x,y}\ci A$, for some $A\in L/\bgr$. 
Hence $\gf(\bgq)=\bgr$.
\end{proof}

Note that the lattice $K$ in the proof is 
the \emph{sum} of the lattices $A$, 
for $A\in L/\bgr$, in the sense of 
E.\ Graczy\'nska and G.\ Gr\"atzer \cite{GG80}; 
for the finite case this is quite easy to see via G. Cz\'edli \cite{gC09}.

For a second look at Theorem~\ref{T:main},  
let $\gf$ be a homomorphism from the lattice~$L$ 
onto the lattice $K$, with congruence kernel $\bgg$, 
so $K \equiv L /\bgg$. Let $\bga$ be a congruence on~$L$. 
We define a binary relation $\bgr$ on $K$ as follows: 
$\cng x = y (\bgr)$ if there are elements $u, v \in L$ 
such that $\cng x = y (\bga)$ and $\gf(x) = u$, $\gf(y) = v$. 
We~use the notation: $\bgr = \bga/\bgg$.

Now we rephrase Theorem \ref{T:main} with this notation.

\begin{theorem}\label{T:mainvar}
Let $K$ and $L$ be lattices and let $K = L /\bgg$. 
Then for every congruence relation $\bga$ of $L$, 
the relation $\bga/\bgg$ is a tolerance on $K$. 

Conversely, for a lattice $K$ 
and a tolerance $\bgt$ of $K$, 
there is a lattice
$L$, congruences $\bga$ and $\bgg$ on $L$, 
and a lattice isomorphism  $\gf\colon L/\bgg\to K$ 
such that $\bgt =\gf(\bga/\bgg)$.
\end{theorem}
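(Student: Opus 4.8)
The plan is to obtain both halves directly from the material already at hand. For the first assertion, note that $\bga/\bgg$ is simply the image of $\bga$ under the canonical projection: writing $\pi\colon L\to L/\bgg$ for the natural surjection, the definition amounts to the statement that $(x,y)\in\bga/\bgg$ iff $(x,y)=(\pi(u),\pi(v))$ for some $(u,v)\in\bga$, that is, $\bga/\bgg=\pi(\bga)$. Since $\pi$ is a lattice homomorphism of $L$ onto $L/\bgg=K$, the folklore fact recalled before Theorem~\ref{T:main} — the homomorphic image of a congruence is a tolerance — gives at once that $\bga/\bgg$ is a tolerance on $K$. Concretely, reflexivity and symmetry are inherited from $\bga$, and the Substitution Properties transfer because $\pi$ preserves the operations; this is exactly the content of that folklore statement.

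For the converse, feed the lattice $K$ and its tolerance $\bgt$ into Theorem~\ref{T:main}. That theorem produces a lattice, which we now rename $L$, a congruence $\bga$ of $L$, and a surjective lattice homomorphism $\psi\colon L\to K$ with $\bgt=\psi(\bga)$. Put $\bgg=\ker\psi$. By the homomorphism theorem, $\psi$ factors as $\psi=\gf\circ\pi$, where $\pi\colon L\to L/\bgg$ is the canonical surjection and $\gf\colon L/\bgg\to K$ is a lattice isomorphism. Using $\bga/\bgg=\pi(\bga)$ as in the previous paragraph,
\[
  \gf(\bga/\bgg)=\gf\bigl(\pi(\bga)\bigr)=(\gf\circ\pi)(\bga)=\psi(\bga)=\bgt,
\]
which is the required conclusion. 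Thus the construction of $L$, $\bga$, $\bgg$ is entirely delegated to Theorem~\ref{T:main}, and no new combinatorial input is needed.

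The proof is therefore short, and the only steps that call for a little care — the closest thing to an obstacle — are the translations between the two notational conventions. One must check that the relation $\bga/\bgg$, defined through arbitrary $\pi$-preimages, is well defined on $K$ and honestly coincides with $\pi(\bga)$, and that taking $\bgg=\ker\psi$ is legitimate so that the homomorphism theorem supplies the isomorphism $\gf$. Both points are routine, so there is no real difficulty beyond matching up the notation of Theorem~\ref{T:main} with that of the present statement.
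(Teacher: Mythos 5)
Your proof is correct and follows exactly the route the paper intends: Theorem~\ref{T:mainvar} is offered there as a mere rephrasing of Theorem~\ref{T:main}, with the first half being the folklore fact that a homomorphic image of a congruence is a tolerance and the second half being Theorem~\ref{T:main} filtered through the homomorphism theorem. Your identification $\bga/\bgg=\pi(\bga)$ and the factorization $\psi=\gf\circ\pi$ with $\bgg=\ker\psi$ are precisely the translation the paper has in mind.
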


\end{document}